\newtheorem{Theorem}{Theorem}[section]
\newtheorem{Proposition}[Theorem]{Proposition}
\newtheorem{Lemma}[Theorem]{Lemma}
\newtheorem{Corollary}[Theorem]{Corollary}
\theoremstyle{definition}
\newtheorem{Definition}[Theorem]{Definition}
\newtheorem{Remark}[Theorem]{Remark}
\newcommand{\bTheorem}[1]{
\begin{Theorem} \label{T#1} }
\newcommand{\eT}{\end{Theorem}}
\newcommand{\bProposition}[1]{
\begin{Proposition} \label{P#1}}
\newcommand{\eP}{\end{Proposition}}
\newcommand{\bLemma}[1]{
\begin{Lemma} \label{L#1} }
\newcommand{\eL}{\end{Lemma}}
\newcommand{\bCorollary}[1]{
\begin{Corollary} \label{C#1} }
\newcommand{\eC}{\end{Corollary}}
\newcommand{\bRemark}[1]{
\begin{Remark} \label{R#1} }
\newcommand{\eR}{\end{Remark}}
\newcommand{\bDefinition}[1]{
\begin{Definition} \label{D#1} }
\newcommand{\eD}{\end{Definition}}
\newcommand{\Del}{\Delta_x}
\newcommand{\bfphi}{\boldsymbol{\varphi}}
\newcommand{\bFormula}[1]{
\begin{equation} \label{#1}}
\newcommand{\eF}{\end{equation}}
\newcommand{\Ov}[1]{\overline{#1}}
\newcommand{\vt}{\vartheta}
\newcommand{\vc}[1]{{\bf #1}}
\newcommand{\Div}{{\rm div}_x}
\newcommand{\Grad}{\nabla_x}
\newcommand{\dx}{\,{\rm d} {x}}
\newcommand{\dt}{\,{\rm d} t }
\newcommand{\intO}[1]{\int_{\Omega} #1 \ \dx}
\newcommand{\avintO}[1]{\fint_{\Omega} #1 \dx}
\newcommand{\vv}{\vc{v}}
\newcommand{\D}{{\rm d}}
\newcommand{\br}{ \nonumber \\ }
\def\softd{{\leavevmode\setbox1=\hbox{d}%
          \hbox to 1.05\wd1{d\kern-0.4ex{\char039}\hss}}}
\definecolor{Cgrey}{rgb}{0.85,0.85,0.85}
\definecolor{Cblue}{rgb}{0.50,0.85,0.85}
\definecolor{Cred}{rgb}{1,0,0}
\definecolor{fancy}{rgb}{0.10,0.85,0.10}
\newcommand\Cbox[2]{%
    \newbox\contentbox%
    \newbox\bkgdbox%
    \setbox\contentbox\hbox to \hsize{%
        \vtop{
            \kern\columnsep
            \hbox to \hsize{%
                \kern\columnsep%
                \advance\hsize by -2\columnsep%
                \setlength{\textwidth}{\hsize}%
                \vbox{
                    \parskip=\baselineskip
                    \parindent=0bp
                    #2
                }%
                \kern\columnsep%
            }%
            \kern\columnsep%
        }%
    }%
    \setbox\bkgdbox\vbox{
        \color{#1}
        \hrule width  \wd\contentbox %
               height \ht\contentbox %
               depth  \dp\contentbox
        \color{black}
    }%
    \wd\bkgdbox=0bp%
    \vbox{\hbox to \hsize{\box\bkgdbox\box\contentbox}}%
    \vskip\baselineskip%
}
\date{}
\begin{document}


\title{The Oberbeck--Boussinesq system with non--local boundary conditions}

\author{Anna Abbatiello \thanks{The work of A.~A. was supported by the ERC-STG Grant n. 759229 HiCoS ``Higher Co-dimension Singularities: Minimal Surfaces and the Thin Obstacle Problem".}
 \and Eduard Feireisl
\thanks{The work of E.F. was partially supported by the
Czech Sciences Foundation (GA\v CR), Grant Agreement
21--02411S. The Institute of Mathematics of the Academy of Sciences of
the Czech Republic is supported by RVO:67985840. }
}

\date{}

\maketitle

\centerline{Sapienza University of Rome, Department of Mathematics ``G. Castelnuovo",}

\centerline{Piazzale Aldo Moro 5, 00185 Rome, Italy.}

\centerline{Email: {\tt anna.abbatiello@uniroma1.it}}

\medskip

\centerline{Institute of Mathematics of the Academy of Sciences of the Czech Republic}

\centerline{\v Zitn\' a 25, CZ-115 67 Praha 1, Czech Republic}

\centerline{Email: {\tt feireisl@math.cas.cz}}

\begin{abstract}
	
	We consider the Oberbeck--Boussinesq system with non--local boundary conditions arising as a singular limit of the full Navier--Stokes--Fourier system in the regime of low Mach and low Froude number. The existence of strong solutions is shown on a maximal time interval $[0, T_{\rm max})$. Moreover, 
	$T_{\rm max} = \infty$ in the two dimensional setting.

\end{abstract}


{\bf Keywords:} Oberbeck--Boussinesq system, non--local boundary condition, strong solution

\bigskip

\centerline{\it Dedicated to Constantine Dafermos of the occassion of his 80-th birthday}


\section{Introduction}
\label{i}

We consider the Oberbeck--Bousinesq system describing the motion of an incompressible viscous and heat conducting fluid confined to a bounded domain $\Omega \subset R^d$, $d=2,3$. The velocity $\vv = \vv(t,x)$ of the fluid and the temperature $\Theta=\Theta (t,x)$ satisfy the following system of equations:

\begin{mdframed}[style=MyFrame]
	\begin{align}
		\partial_t \vv + \Div (\vv \otimes \vv) + \Grad \Pi &= \mu \Del \vv - \Theta \Grad G, \label{i2}\\
			\Div \vv &= 0, \label{i1}\\
		\partial_t \Theta + \Div (\Theta \vv) + a \Div (G \vv) &= \kappa \Del \Theta; \label{i3}
		\end{align}
supplemented with the no-slip boundary condition for the velocity
\begin{equation} \label{i4}						
	\vv|_{\partial \Omega} = 0; 
	\end{equation}
and the non--local Dirichlet boundary condition for the temperature
\begin{equation} \label{i5}	
\Theta|_{\partial \Omega} = \Theta_B - \lambda  \avintO{ \Theta }.
\end{equation}
\end{mdframed}

\noindent The problem \eqref{i1}--\eqref{i5} has been identified as a singular limit of the full Navier--Stokes--Fourier system in the low Mach and low Froude number regime, see \cite{BelFeiOsch}. In this context, the temperature $\Theta$ is interpreted as a deviation from a background temperature and as such need not be non--negative. The symbol $G = G(x)$ stands for the gravitational potential and $\Theta_B = \Theta_B(x)$
is the prescribed boundary temperature. The transport coefficients $\mu$ and $\kappa$ are positive constants, 
$a \in R$ and $\lambda > 0$. In addition, we assume
\begin{equation} \label{i6}
\avintO{ G } = 0,\ \Del G = 0.	
	\end{equation}

There is a vast amount of literature concerning the Oberbeck--Boussinesq system, see e.g. Constantin and Doering \cite{ConDoe}, Foias, Manley and 
Temam \cite{FoMaTe}, Li and Titi \cite{LiTi} or the survey by Zeytounian \cite{ZEY1}, and the references therein. The main novelty of the present paper is  the non--local term in \eqref{i5} arising in the singular limit of the complete Navier--Stokes--Fourier system. 

Parabolic problems with non--local boundary conditions have been studied by a number of authors, see e.g.
Day \cite{Day1982}, Friedman \cite{Friedman86}, and, more recently, Pao \cite{Pao} among others. Unfortunately, most of 
the results concern the case $|\lambda| \leq 1$, while such a restriction cannot be justified in the singular limit process. Accordingly, we focus on the general case $\lambda > 0$. In particular, solutions of 
\eqref{i3}, \eqref{i5} may not comply with the standard maximum principle.

The plan of the paper is as follows. First, following \cite[Chapter 5]{FeiNovOpen}, we construct a weak solution of the problem for any finite energy initial data. These solutions are global in time for both $d=2$ and $d=3$. In addition, we show that the temperature $\Theta$ of any weak solution is necessarily bounded, 
see Section \ref{ws}. As a result, we obtain weak solutions of the Navier--Stokes system \eqref{i1}, \eqref{i2}
driven by a bounded force. Applying the $L^p$ regularity theory due to Gerhardt \cite{Gerh}, Giga and Miyakawa
\cite{GigMiy},
Solonnikov \cite{Solon}, von Wahl \cite{vonWahl}, together with the abstract weak--strong uniqueness result \cite{AbbFei2}, 
we conclude that the velocity field $\vv$ is in fact regular. This observation together with a standard bootstrap argument yields regularity of the temperature, and, consequently, the existence of strong solutions 
defined on some maximal time interval $[0, T_{\rm max})$. In addition, $T_{\rm max} = \infty$ if $d=2$,
see Section \ref{ss}.  Finally, repeating the bootstrap argument, we conclude the problem admits a classical 
smooth solutions as soon as the data are smooth satisfying the associated compatibility conditions, 
see Section \ref{cs}.
 
\section{Weak solutions}
\label{ws}

Without loss of generality, we may assume $a=0$. Indeed replacing $\Theta \approx \Theta + a G$ and using hypothesis \eqref{i6} we obtain a new system 
\begin{align}
	\partial_t \vv + \Div (\vv \otimes \vv) + \Grad \Pi &= \mu \Del \vv - \Theta \Grad G, \label{a2}\\
	\Div \vv &= 0, \label{a1}\\
	\partial_t \Theta + \Div (\Theta \vv)   &= \kappa \Del \Theta, \label{a3}
	\\
	\vv|_{\partial \Omega} &= 0, \label{a4} \\
	\Theta|_{\partial \Omega} &= \Theta_B - \lambda  \avintO{ \Theta },
	\label{a5}
\end{align}
with $\Theta_B \approx \Theta_B + a G.$

\begin{Definition}[{\bf Weak solution}] \label{sD1}
	We say that $\vv$, $\Theta$ is \emph{weak solution} to problem \eqref{a1}--\eqref{a5} 
	with the initial data 
	\[
	\vv(0, \cdot) = \vv_0,\ \Theta(0, \cdot) = \Theta_0
	\]
	if the following holds:
	
	\begin{itemize}
		\item Regularity.
		\begin{align} 
			\vv &\in L^\infty(0,T; L^2(\Omega; R^d)) \cap 
			L^2(0,T; W^{1,2}_0 (\Omega; R^d)), \br
		\Theta &\in L^\infty((0,T) \times \Omega) \cap L^2(0,T; W^{1,2}(\Omega)).
		\label{s4}
	\end{align}
	\item Equations of motion. 
\begin{align} 
\int_0^T &\intO{ \Big[ \vv \cdot \partial_t \bfphi + (\vv \otimes \vv): \Grad \bfphi \Big] } \dt \br 
&= \int_0^T \intO{ \mu \Grad \vv : \Grad \bfphi } \dt + \int_0^T \intO{ \Theta \Grad G \cdot \bfphi } \dt
- \intO{ \vv_0 \cdot \bfphi(0, \cdot) }
\label{ws2}	
	\end{align}
for any $\bfphi \in C^1_c([0,T) \times \Omega; R^d)$, $\Div \bfphi = 0$,
\begin{equation} \label{ws1}
	\Div \vv = 0 \ \mbox{a.a. in}\ (0,T) \times \Omega.
	\end{equation}
\item Heat equation.
		\begin{equation} \label{ws3}
		\int_0^T \intO{ \Big[ \Theta \partial_t \varphi + \Theta \vv \cdot \Grad \varphi \Big] } \dt 
		= \int_0^T \intO{ \kappa \Grad \Theta \cdot \Grad \varphi } \dt - \intO{ \Theta_0 \varphi (0, \cdot) }
		\end{equation}
		for any $\varphi \in C^1_c([0,T) \times \Omega)$.
\item Boundary conditions.		 	
		\begin{equation} \label{ws4}
		\Theta  + \lambda \avintO{ \Theta } - \Theta_B \in L^2(0,T; W^{1,2}_0 (\Omega)).
		\end{equation}
\item Mechanical energy inequality.
\begin{equation} \label{ws5}
	\frac{1}{2} \intO{ |\vv|^2 (\tau, \cdot) } + \mu \int_0^\tau \intO{ |\Grad \vv |^2 } \dt \leq 
		\frac{1}{2} \intO{ |\vv_0|^2  } - \int_0^\tau \intO{ \Theta \Grad \vc{G} \cdot \vv } \dt 
	\end{equation}
for any $0 \leq \tau \leq T$.
	\end{itemize} 
\end{Definition}

\begin{Remark} \label{wsR1}
	
	In \eqref{ws4}, we tacitly assume that $\Theta_B$ has been suitably extended inside $\Omega$.
	
	\end{Remark}

\subsection{Existence of weak solution} 

Our first result asserts global--in--time existence of weak solutions.

\begin{mdframed}[style=MyFrame]
	\begin{Theorem}[\bf Weak solutions] \label{MT1}
		
		Suppose that $\Omega \subset R^d$, $d=2,3$ is a bounded Lipschitz domain, and 
		\[
		G \in W^{1, \infty}(\Omega),\ \Theta_B \in C(\Ov{\Omega}) \cap W^{1,2}(\Omega).
		\]
		 Let the initial data satisfy 
		\begin{align} 
			\vv_0 &\in L^2(\Omega; R^d),\ \Div \vv_0 = 0,\ \vv_0 \cdot \vc{n}|_{\partial \Omega} = 0,\br
			\Theta_0 &\in C(\Ov{\Omega}),\ \Theta_0 + \lambda \avintO{ \Theta_0 } = \Theta_B \ \mbox{on}\ 
			\partial \Omega.
			\label{M1}
		\end{align}
	
		Then for any $T > 0$, the problem \eqref{a1}--\eqref{a5} admits a weak solution $\vv$, $\Theta$ in the sense of Definition \ref{sD1}.

	\end{Theorem}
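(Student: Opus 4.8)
The plan is to reduce the non--local Dirichlet condition to a standard one, solve an approximate problem, derive uniform estimates, and pass to the limit. \emph{First} I would fix the scalar average by introducing the harmonic extension $\Theta_B^{\mathrm h}$ of $\Theta_B$ (well defined and bounded on $\Ov{\Omega}$ since $\Theta_B \in C(\Ov{\Omega}) \cap W^{1,2}(\Omega)$) and writing $\Theta = \theta + \Theta_B^{\mathrm h} - \lambda \avintO{\Theta}$ with $\theta \in W^{1,2}_0(\Omega)$. Averaging this identity and using $\lambda > 0$ gives $(1+\lambda)\avintO{\Theta} = \avintO{\Theta_B^{\mathrm h}} + \avintO{\theta}$, so the non--local quantity $\avintO{\Theta}$ is a bounded affine functional of the interior unknown $\theta$; this is precisely where the sign condition $\lambda > 0$ (rather than $|\lambda|\le 1$) is needed. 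With this reduction the system becomes a coupled Navier--Stokes/advection--diffusion problem with homogeneous--type boundary data plus lower--order non--local terms, for which a Faedo--Galerkin scheme as in \cite[Chapter 5]{FeiNovOpen} can be set up: a divergence--free basis vanishing on $\partial\Omega$ for $\vv$ together with the lifted temperature $\theta$. The resulting finite--dimensional system is solved by a Carath\'eodory/fixed--point argument, the bounded linear average causing no difficulty.

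\emph{Next} come the uniform a priori estimates. Testing the momentum balance with $\vv$ yields the mechanical energy identity, with the buoyancy term bounded by $\|\Grad G\|_{L^\infty}\|\Theta\|_{L^2}\|\vv\|_{L^2}$. The decisive step is the temperature estimate: testing \eqref{a3} with $\Theta$ itself and integrating by parts, the convective term drops out because $\Div \vv = 0$ and $\vv|_{\partial\Omega}=0$, while the boundary contribution $\kappa\int_{\partial\Omega}\Theta\,\partial_n\Theta$ splits, via $\Theta|_{\partial\Omega} = \Theta_B - \lambda\avintO{\Theta}$ and the relation $\kappa\int_{\partial\Omega}\partial_n\Theta = |\Omega|\tfrac{\mathrm d}{\mathrm dt}\avintO{\Theta}$ (obtained by integrating \eqref{a3} over $\Omega$), into the integral $\kappa\int_{\partial\Omega}\Theta_B\,\partial_n\Theta$ and the key term $-\tfrac{\lambda|\Omega|}{2}\tfrac{\mathrm d}{\mathrm dt}\big(\avintO{\Theta}\big)^2$. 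Moving the latter to the left produces the energy $\tfrac12\|\Theta\|_{L^2}^2 + \tfrac{\lambda|\Omega|}{2}\big(\avintO{\Theta}\big)^2$, which is positive definite exactly because $\lambda > 0$. The remaining boundary integral is rewritten with the harmonic lift as an exact time derivative plus $\int_\Omega \Theta_B^{\mathrm h}\,\vv\cdot\Grad\Theta$ and a data constant, and absorbed using $\Theta_B^{\mathrm h}\in L^\infty\cap W^{1,2}$. Coupling the two estimates and applying Gr\"onwall gives uniform bounds for $\vv$ in $L^\infty(0,T;L^2)\cap L^2(0,T;W^{1,2}_0)$, for $\Theta$ in $L^\infty(0,T;L^2)\cap L^2(0,T;W^{1,2})$, and for $\avintO{\Theta}$ in $L^\infty(0,T)$.

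\emph{Then} I would establish the $L^\infty$ bound on $\Theta$ demanded by Definition \ref{sD1}. Once $\avintO{\Theta}\in L^\infty(0,T)$ is known from the energy estimate, the boundary datum $\Theta_B - \lambda\avintO{\Theta}$ is bounded in space--time, so the non--locality is effectively frozen and \eqref{a3} becomes a genuine linear advection--diffusion equation with divergence--free drift, bounded Dirichlet data and bounded initial data. For such a problem the weak maximum principle holds: choosing $K \ge \max\{\sup\Theta_0,\ \sup_{(0,T)\times\partial\Omega}(\Theta_B - \lambda\avintO{\Theta})\}$, the Stampacchia truncation $(\Theta - K)^+ \in W^{1,2}_0(\Omega)$ is an admissible multiplier and, because the convective term vanishes for $\Div\vv = 0$, one gets $\tfrac{\mathrm d}{\mathrm dt}\|(\Theta-K)^+\|_{L^2}^2 \le 0$, whence $(\Theta - K)^+ \equiv 0$; the symmetric argument bounds $\Theta$ from below. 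This is how the failure of the naive maximum principle for $\lambda > 1$ is circumvented: the average is controlled first by the $L^2$--energy, and only afterwards is the pointwise bound extracted.

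\emph{Finally}, the passage to the limit is routine: the uniform bounds and the equations give equi--integrability of the time derivatives, so the Aubin--Lions lemma yields strong $L^2((0,T)\times\Omega)$ convergence of the velocity and temperature approximations, sufficient to identify the nonlinear terms $\vv\otimes\vv$ and $\Theta\vv$; all linear terms pass by weak convergence and the non--local functional by its continuity. The boundary condition \eqref{ws4} is preserved because $\theta$ stays in $W^{1,2}_0$, and the mechanical energy inequality \eqref{ws5} follows from weak lower semicontinuity of the convex energy. \textbf{The main obstacle} is the temperature a priori estimate for arbitrary $\lambda > 0$: one must identify the correct Lyapunov functional $\tfrac12\|\Theta\|_{L^2}^2 + \tfrac{\lambda|\Omega|}{2}\big(\avintO{\Theta}\big)^2$ and carefully account for the boundary flux terms, since the seemingly natural multiplier $\Theta - \Theta_B^{\mathrm h} + \lambda\avintO{\Theta}$ produces an indefinite quadratic form that loses coercivity as $\lambda \to \infty$. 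Securing this estimate, and then the $L^\infty$ bound via the frozen non--local datum, is the crux; everything else follows the scheme of \cite[Chapter 5]{FeiNovOpen}.
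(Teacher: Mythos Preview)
Your overall plan is sound and very close to the paper's, but two points deserve comment.

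\textbf{The energy functional.} Your claimed ``main obstacle'' is a non--issue: the multiplier you reject, $Z=\Theta-\Theta_B^{\mathrm h}+\lambda\avintO{\Theta}$, is precisely what the paper uses (after the substitution $V=\Theta+\lambda\avintO{\Theta}$, $Z=V-\Theta_B^{\mathrm h}$). The resulting quadratic form is \emph{not} indefinite; it reads $\tfrac12\bigl[\avintO{|Z|^2}-\tfrac{\lambda}{1+\lambda}(\avintO{Z})^2\bigr]$, which is $\ge \tfrac{1}{2(1+\lambda)}\avintO{|Z|^2}$ by Jensen. In fact, a direct computation shows that the paper's functional equals your $\tfrac12\|\Theta\|_{L^2}^2+\tfrac{\lambda|\Omega|}{2}(\avintO{\Theta})^2$ up to terms involving only $\Theta_B^{\mathrm h}$. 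So your route via the boundary--flux identity $\kappa\int_{\partial\Omega}\partial_n\Theta=|\Omega|\tfrac{\D}{\D t}\avintO{\Theta}$ and the paper's route via the shifted unknown are two bookkeepings of the same energy balance.

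\textbf{The $L^\infty$ bound.} Here the paper differs from your sketch in a way that matters. The paper uses a \emph{semi}--Galerkin scheme: Galerkin for $\vv$ only, while the heat equation \eqref{ws6} is solved \emph{exactly} for each smooth $\vv_N$. Since $\vv_N\in C^1([0,T];X_N)$ is bounded and smooth, parabolic regularity makes $\Theta_N$ classical up to the boundary (using the compatibility in \eqref{M1}), and the standard maximum principle applies directly at the approximate level, yielding a uniform bound \eqref{a14}. Your proposal instead seems to run a full Galerkin for $\theta$ as well and to apply the Stampacchia truncation at the limit, where $\vv$ has only Leray--Hopf regularity. In $d=3$ this step is delicate: the pairing of $\Div(\Theta\vv)$ with $(\Theta-K)^+$ is not obviously integrable (one has $\Theta\vv\in L^2(0,T;L^{3/2})$ but $(\Theta-K)^+\in L^2(0,T;W^{1,2}_0)$ only), and the chain rule for $\partial_t\Theta$ acting on a nonlinear function of $\Theta$ needs justification. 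It can be rescued by a further truncation (test with $\min((\Theta-K)^+,M)$, on whose support $\Theta$ is bounded), but this is not as routine as you suggest. The paper's choice of solving the heat equation exactly against a smooth drift is exactly what makes the maximum--principle step clean.
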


\end{mdframed}

The rest of this section is devoted to the proof of Theorem \ref{MT1}.

\subsection{Faedo--Galerkin approximation}

First, suppose that $\partial \Omega$ is smooth. 
Similarly to \cite[Chapter 5]{FeiNovOpen}, approximate solutions to problem \eqref{a1}--\eqref{a5} are constructed by means of the Faedo--Galerkin approximation of the momentum equation \eqref{a2} and exact 
solutions to the heat equation \eqref{a3}, \eqref{a5}. 

Let $\{ \vc{w}_n \}_{n=1}^\infty$ be the orthogonal basis of the Hilbert space of solenoidal functions $L^2_{\sigma}(\Omega; R^d)$ formed by the 
eigenfunctions of the Stokes operator, with the associated orthogonal projections $\Pi_N$,
\[
\Pi_N : L^2_\sigma \to X_N \equiv {\rm span} \{ \vc{w}_1, \dots, \vc{w}_N \},\ N=1,2,\dots.
\]

The approximate velocity fields $\vv_N \in C^1([0,T]; X_N)$ are obtained as solutions of the system of ordinary differential equations:
\begin{equation} \label{w1}
	\partial_t \vv_N + \Pi_N [\Div(\vv_N \otimes \vv_N)] = \mu \Del \vv_N - \Pi_N [\Theta \Grad G],\ 
	\vv_N(0, \cdot) = \Pi_N \vv_0,
	N=1,2,\dots
\end{equation}

The approximate temperature $\Theta = \Theta_N$ is \emph{exact} solution to the problem 
\begin{equation} \label{ws6}
	\partial_t \Theta_N + \Div (\vv_N \Theta_N ) = \kappa \Del \Theta_N,\ 
	\Theta_N|_{\partial \Omega} = \Theta_B - \lambda \avintO{ \Theta_N },\ 
	\Theta_N (0, \cdot) = \Theta_0.	
\end{equation}

\begin{Lemma}[{\bf Uniqueness for the heat equation}]\label{sL1}
	Given a velocity 
	\[
	\vv \in L^\infty((0,T) \times \Omega; R^d),
	\]
	there is at most one weak solution $\Theta$ of the problem
	\begin{equation} \label{ws7}
		\partial_t \Theta + \Div (\vv \Theta ) = \kappa \Del \Theta,\ 
		\Theta|_{\partial \Omega} = \Theta_B - \lambda \avintO{ \Theta },\ 
		\Theta (0, \cdot) = \Theta_0,	
	\end{equation}
in the sense specified in \eqref{ws3}, \eqref{ws4} in Definition \ref{sD1} in the class
\[
\Theta \in L^\infty(0,T; L^2(\Omega)) \cap L^2(0,T; W^{1,2}(\Omega))
\]
\end{Lemma}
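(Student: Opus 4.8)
The plan is a standard energy/Grönwall argument, with the one essential twist that the test function must be adapted to the non--local boundary condition and that the sign $\lambda>0$ is exactly what makes the resulting energy coercive. Let $\Theta_1,\Theta_2$ be two weak solutions in the stated class sharing the same data $\Theta_0,\Theta_B$, and set $\Phi:=\Theta_1-\Theta_2$. Subtracting the two weak formulations \eqref{ws3}, the initial--data terms cancel, so $\Phi$ satisfies
\[
\int_0^T\intO{\big[\Phi\,\partial_t\varphi+\Phi\,\vv\cdot\Grad\varphi\big]}\dt=\int_0^T\intO{\kappa\Grad\Phi\cdot\Grad\varphi}\dt
\]
for every $\varphi\in C^1_c([0,T)\times\Omega)$, with $\Phi\in L^\infty(0,T;L^2(\Omega))\cap L^2(0,T;W^{1,2}(\Omega))$ and vanishing initial value. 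Subtracting the boundary conditions \eqref{ws4} gives $\Psi:=\Phi+\lambda\avintO{\Phi}\in L^2(0,T;W^{1,2}_0(\Omega))$. Since $\avintO{\Phi}$ depends on $t$ alone, $\Grad\Psi=\Grad\Phi$, so $\Psi$ is an admissible spatial test function whose gradient coincides with that of $\Phi$.

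The next step is to test the equation for $\Phi$ with $\Psi$. From the equation $\partial_t\Phi=\kappa\Del\Phi-\Div(\vv\Phi)\in L^2(0,T;W^{-1,2}(\Omega))$, so the pairing $\langle\partial_t\Phi,\Psi\rangle$ makes sense. The heart of the computation is the identity, valid because $\frac{d}{dt}\avintO{\Phi}=\avintO{\partial_t\Phi}$ and $|\Omega|\,\avintO{\cdot}=\intO{\cdot}$,
\[
\tfrac{1}{2}\tfrac{d}{dt}\Big[\,\|\Phi\|_{L^2(\Omega)}^2+\lambda|\Omega|\big(\avintO{\Phi}\big)^2\Big]=\intO{\partial_t\Phi\,\big(\Phi+\lambda\avintO{\Phi}\big)}=\langle\partial_t\Phi,\Psi\rangle .
\]
Writing $E(t):=\|\Phi\|_{L^2(\Omega)}^2+\lambda|\Omega|\big(\avintO{\Phi}\big)^2$ and using $\Grad\Psi=\Grad\Phi$ together with $\Psi\in W^{1,2}_0$ to integrate the diffusion and convection terms by parts (the boundary contributions vanish), I obtain
\[
\tfrac{1}{2}\tfrac{d}{dt}E+\kappa\|\Grad\Phi\|_{L^2(\Omega)}^2=\intO{\vv\,\Phi\cdot\Grad\Phi}.
\]
The crucial structural point is that $\lambda>0$ makes the non--local contribution $\lambda|\Omega|(\avintO{\Phi})^2$ nonnegative, so that $E(t)\ge\|\Phi(t)\|_{L^2(\Omega)}^2\ge0$. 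This is precisely where the sign of $\lambda$ enters and is what rescues the argument in the regime $\lambda>1$ excluded by the classical references.

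Absorbing the convection term by Young's inequality,
\[
\Big|\intO{\vv\,\Phi\cdot\Grad\Phi}\Big|\le\|\vv\|_{L^\infty}\|\Phi\|_{L^2(\Omega)}\|\Grad\Phi\|_{L^2(\Omega)}\le\tfrac{\kappa}{2}\|\Grad\Phi\|_{L^2(\Omega)}^2+\tfrac{\|\vv\|_{L^\infty}^2}{2\kappa}\|\Phi\|_{L^2(\Omega)}^2 ,
\]
and using $\|\Phi\|_{L^2(\Omega)}^2\le E$, I arrive at $\frac{d}{dt}E\le\frac{\|\vv\|_{L^\infty}^2}{\kappa}E$. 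Since $E(0)=0$ (both solutions issue from the same $\Theta_0$), Grönwall's lemma forces $E\equiv0$, hence $\Phi\equiv0$ and $\Theta_1=\Theta_2$.

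I expect the main obstacle to be the rigorous justification of the energy identity, not the algebra above: the test function $\Psi$ is only $L^2$ in time, and it is $\Phi$ (not $\Psi$) that fails to lie in $W^{1,2}_0$, so $\Phi$ itself is inadmissible as a test function. I would regularize in time by mollification, replace $\Phi,\Psi$ by their mollifications $\Phi_h,\Psi_h$ (noting $\Psi_h=\Phi_h+\lambda\avintO{\Phi_h}\in W^{1,2}_0$), derive the identity for the smooth--in--time $\Phi_h$, where $\frac{d}{dt}E_h$ is computed classically, and then let $h\to0$. The diffusion and convection integrals pass to the limit via $\Phi_h\to\Phi$ in $L^2(0,T;W^{1,2})$ and $(\vv\Phi)_h\to\vv\Phi$ in $L^2((0,T)\times\Omega)$; the endpoint values $E_h(\tau),E_h(0)$ converge because the regularized identity shows $\frac{d}{dt}E_h$ is convergent, hence equi--integrable, in $L^1(0,T)$, so the $E_h$ are equicontinuous and $E$ admits a continuous representative on $[0,T]$ with $E(0)=0$ (the latter encoding the common initial datum through the cancellation of the $\intO{\Theta_0\varphi(0,\cdot)}$ terms). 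The elementary relation $\Phi=\Psi-\tfrac{\lambda}{1+\lambda}\avintO{\Psi}$ confirms that this continuity transfers between $\Phi$ and $\Psi$, closing the argument.
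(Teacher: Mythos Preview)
Your argument is correct and follows essentially the same route as the paper: introduce the shifted function $\Psi=\Phi+\lambda\avintO{\Phi}$ (the paper's $V$), which has zero trace, test the difference equation against it, and exploit that the resulting energy $\|\Phi\|_{L^2}^2+\lambda|\Omega|\big(\avintO{\Phi}\big)^2$ is coercive precisely because $\lambda>0$; this quantity is, up to the factor $|\Omega|$, identical to the paper's $\avintO{|V|^2}-\tfrac{\lambda}{1+\lambda}\big(\avintO{V}\big)^2$. The only cosmetic difference is that the paper (tacitly using $\Div\vv=0$, which holds for the solenoidal fields $\vv_N$ in the application) drops the convection term outright and obtains an exact identity, whereas you estimate it via Young and close with Gr\"onwall---your version thus works verbatim even without the incompressibility constraint, at the mild cost of one extra inequality.
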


\begin{proof}
Let $\Theta_1$, $\Theta_2$ be two solutions with the same initial value $\Theta_0$. Accordingly, 
\[
\hat \Theta = \Theta_1 - \Theta_2 
\]
solves the problem 
\[
	\partial_t \hat \Theta + \Div ( \hat \Theta \vv) = \kappa \Del \hat \Theta,\ 
	\hat \Theta|_{\Omega} =  - \lambda \avintO{ \hat \Theta }, 
	\
	\hat \Theta(0, \cdot) = 0.
	\]
Thus, introducing a new quantity 
\[
V = \hat \Theta + \lambda \avintO{ \hat \Theta},
\]
we obtain 
\begin{equation} \label{ws8}
\partial_t \left( V - \frac{\lambda}{1 + \lambda} \avintO{ V } \right) + \Div (\vv V) = 
\kappa \Del V,\ V|_{\partial \Omega} = 0,\ V(0, \cdot) = 0,
\end{equation}
where the differential equation is satisfied in the sense of distributions. As 
\[
V \in L^\infty(0,T; L^2(\Omega)) \cap L^2(0,T; W^{1,2}(\Omega)), 
\]	
we may justify multiplication of equation \eqref{ws8} on $V$ and subsequent integration by parts yielding
\[
\frac{1}{2} \left[ \avintO{ |V|^2 (\tau, \cdot)}  - \frac{\lambda}{1 + \lambda} \left( \avintO{ 
V } \right)^2 \right] 
+ \kappa \int_0^\tau \avintO{ |\Grad V|^2 } = 0 \ \mbox{for any}\ 0 \leq \tau \leq T.
\]	
Consequently $V = 0 \ \Rightarrow \ \hat \Theta = 0$.	
	\end{proof}

In view of Lemma \ref{sL1}, we may consider a well defined mapping 
\[
\vv_N \mapsto \Theta (\vv_N) = \Theta_N,
\]
where $\Theta_N$ is the unique solution of \eqref{ws6} as soon as we show such a solution exists. 

The proof of solvability of \eqref{ws6} for a given $\vv_N$ follows the same line of arguments as the proof of 
Lemma \ref{sL1}. First, we introduce 
\[
V = \Theta + \lambda \avintO{ \Theta }
\]
transforming the problem to 
\begin{equation} \label{ws9}
\partial_t \left( V - \frac{\lambda}{1 + \lambda} \avintO{ V } \right) + \Div (\vv_N V) = 
\kappa \Del V,\ V|_{\partial \Omega} = \Theta_B,\ V(0, \cdot) = \Theta_0 + \lambda \avintO{ \Theta_0 }.	
	\end{equation}
Solutions of \eqref{ws9} may be written in the form 
\[
V = Z + \Theta_B, 
\]
where 
\begin{equation} \label{ws10}
	\partial_t \left( Z - \frac{\lambda}{1 + \lambda} \avintO{ Z } \right) + \Div (\vv_N (Z + \Theta_B)) = 
	\kappa \Del Z,\ Z|_{\partial \Omega} = 0,\ Z(0, \cdot) = \Theta_0 + \lambda \avintO{ \Theta_0 } - \Theta_B.	
\end{equation}
Here, for the sake of simplicity, we have considered the harmonic extension of $\Theta_B$ inside $\Omega$.
Seeing that the mapping 
\[
\mathcal{L}[V] = V - \frac{\lambda}{1 + \lambda} V,\ \lambda > 0 
\ \mbox{is a self--adjoint isomphism on the Hilbert space}\ L^2(\Omega),
\]
we may solve \eqref{ws10} by a Faedo--Galerkin method based on the system of eigenfuctions of the 
Dirichlet Laplacean on the domain $\Omega$. Note that this approximation is compatible with the associated 
``energy'' estimates based on multiplication of \eqref{ws10} on $Z$:
\begin{align} 
\frac{1}{2} &\left[ \avintO{ |Z|^2 (\tau, \cdot)}  - \frac{\lambda}{1 + \lambda} \left( \avintO{ 
	Z } \right)^2 \right]
+ \kappa \int_0^\tau \avintO{ |\Grad Z|^2 } \br &\leq 
\frac{1}{2}\left[ \avintO{ |Z_0|^2 }  - \frac{\lambda}{1 + \lambda} \left( \avintO{ 
	Z_0} \right)^2 \right]  + \int_0^\tau \intO{ \Theta _B \vv_N \cdot \Grad Z } 
\label{ws11}
\end{align}
for any $0 \leq \tau \leq T$, where 
\[
Z_0 = \Theta_0 + \lambda \avintO{ \Theta_0 } - \Theta_B.
\]

Inequality \eqref{ws11} yields the desired uniform bounds so that the Faedo--Galerkin approximation converges to a unique solution in the class 
\[
Z \in L^\infty(0,T; L^2(\Omega)) \cap 
L^2(0,T; W^{1,2}_0 (\Omega)) \ \Rightarrow \ 
\Theta_N \in L^\infty(0,T; L^2(\Omega)) \cap 
L^2(0,T; W^{1,2} (\Omega)).
\]

It follows from the previous discussion that local-in-time existence of the approximate solutions 
$\vv_N$, $\Theta_N$ can be established exactly as in \cite[Chapter 5]{FeiNovOpen}. Moreover, 
multiplying the approximate momentum equation on $\vv_N$ we obtain an approximate version of the energy 
balance, 
\begin{equation} \label{ws12}
	\frac{1}{2} \intO{ |\vv_N|^2 (\tau, \cdot) } + \mu \int_0^\tau \intO{ |\Grad \vv_N |^2 } \dt \leq 
\frac{1}{2} \intO{ |\vv_0|^2  } - \int_0^\tau \intO{ \Theta_N \Grad \vc{G} \cdot \vv_N } \dt,
\end{equation}
which, together with \eqref{ws11} written in terms of $\Theta_N$, 
\begin{align} 
	\frac{1}{2} &\frac{1}{1 + \lambda}\intO{ \left| \Theta_N + \lambda \avintO{ \Theta_N} - \Theta_B \right|^2 (\tau, \cdot) } 
	+ \kappa \int_0^\tau \intO{ |\Grad( \Theta - \Theta_B) |^2 } \br &\leq 
	\frac{1}{2}  \intO{ \left| \Theta_0 + \lambda \avintO{ \Theta_0 } - \Theta_B \right|^2 } + \int_0^\tau \intO{ \Theta _B \vv_N \cdot \Grad(\Theta - \Theta_B) },
	\label{ws13}
\end{align}
yield the desired uniform bounds that guarantee global existence of the approximate solutions up to any desired time $T > 0$.

Finally, as the approximate velocities $\vv_N$ are smooth, the standard parabolic local estimates yield smoothness of the approximate temperatures $\Theta_N$ in the interior of the space--time cylinder 
$(0,T) \times \Omega$. Moreover, thanks to the compatibility condition \eqref{M1} imposed on the 
initial data, the approximate temperatures are continuous up to the boundary of $[0,T] \times \Ov{\Omega}$. 
In particular, the standard maximum principle yields 
\begin{equation} \label{a14}
	\underline{\Theta} \leq \Theta_N (t,x) \leq \Ov{\Theta} \ \mbox{for}\ (t,x) \in [0,T] \times \Omega, 
\end{equation}
for some constants $\underline{\Theta}$, $\Ov{\Theta}$ depending only on the initial and boundary data and $T$.

With the uniform bounds \eqref{ws12}--\eqref{a14}, it is a routine matter to perform the limit $N \to \infty$ 
in the family of approximate solutions $( \vv_N; \Theta_N )_{N=1}^\infty$ and to show that the limit yields 
the desired weak solution the existence of which is claimed in Theorem \ref{MT1}. Note that, by means of the 
standard Aubin--Lions argument, 
\[
\Theta_N \to \Theta \ \mbox{in}\ L^2((0,T) \times \Omega) 
\ \mbox{and weakly in}\ L^2(0,T; W^{1,2}(\Omega));
\]
which, by interpolation, yields the convergence of traces on $\partial \Omega$. We have proved 
Theorem \ref{MT1}. 

\begin{Remark} \label{wsR2}
	
	The reader will have noticed that Theorem \ref{MT1} requires the domain $\Omega$ to be merely Lipschitz, while the proof via Faedo--Galerkin approximation was based on smooth domains. However, the result can be extended to Lipschitz domains by their approximation by smooth ones. The details are left to the reader.
	
	\end{Remark}

\section{Strong solutions}
\label{ss}

Our next goal is to improve regularity of the weak solutions on condition that the data are smooth.
 
\begin{mdframed}[style=MyFrame]
	\begin{Theorem}[\bf Strong solutions] \label{MT2}
		
		In addition to the hypotheses of Theorem \ref{MT1}, suppose
		that $\Omega \subset R^d$, $d=2,3$ is a bounded domain of class $C^2$, 
		\begin{equation} \label{ss1}
		G \in W^{1, \infty}({\Omega}),\ \Theta_B \in C^2(\Ov{\Omega}),	
			\end{equation}
		and the initial data satisfy 
		\begin{align} 
			\Theta_0 \in W^{2,p}(\Omega),\ \vv_0 \in W^{2,p}(\Omega; R^d),\ \Div \vv_0 = 0, 
			\ \mbox{for any}\ 1 \leq p < \infty, \br
			\mbox{together with the compatibility conditions} \br
			\vv_0 = 0,\ \Theta_0 + \lambda \avintO{ \Theta_0 }|_{\partial \Omega} = 
			\Theta_B \ \mbox{on}\ \partial \Omega
			\label{M3}
		\end{align}
		
		Then there exists $T_{\rm max} > 0$, $T_{\rm max} = \infty$ if $d = 2$, such that
		any weak solution $\vv$, $\Theta$ belongs to the regularity class
		\begin{align} 
			\vv \in L^p(0,T; W^{2,p}(\Omega; R^d)),\ \partial_t \vv \in L^p(0,T; L^{p}(\Omega; R^d)), \br 
			\Theta \in L^p(0,T; W^{2,p}(\Omega)),\ 
			\partial_t \Theta \in L^p(0,T; L^{p}(\Omega; R^d)) \ \mbox{for any}\ 1 \leq p < \infty
			\label{M4}	
		\end{align}	
		for any $0 < T < T_{\rm max}$.	
		
	\end{Theorem}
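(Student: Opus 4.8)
\emph{Plan of proof.} The strategy is to decouple the system by freezing the buoyancy force, to invoke the $L^p$ maximal regularity theory for the Stokes operator in order to produce a local strong solution of the momentum equation, and then to identify that solution with the given weak velocity by the abstract weak--strong uniqueness principle. First I would record that any weak solution in the sense of Definition \ref{sD1} already satisfies $\Theta \in L^\infty((0,T) \times \Omega)$ by the regularity class \eqref{s4}. Since $G \in W^{1,\infty}(\Omega)$, the buoyancy force
\[
\mathbf{f} := - \Theta \Grad G \in L^\infty((0,T) \times \Omega; R^d)
\]
is then a fixed datum, bounded on every finite interval (with a bound possibly depending on $T$).

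Next I would regard $\vv$ as a Leray--Hopf solution of the Navier--Stokes system driven by the \emph{given} force $\mathbf{f}$, with initial datum $\vv_0 \in W^{2,p}(\Omega; R^d)$ satisfying the compatibility conditions in \eqref{M3}. Applying the $L^p$ theory of Giga--Miyakawa \cite{GigMiy}, Solonnikov \cite{Solon}, von Wahl \cite{vonWahl} and Gerhardt \cite{Gerh} to the Stokes operator on the $C^2$ domain $\Omega$, together with a standard fixed point argument to absorb the convective term, yields a local-in-time strong solution $\tilde{\vv}$ on a maximal interval $[0, T_{\rm max})$ enjoying the regularity claimed in the first line of \eqref{M4}. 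The decisive step is then the abstract weak--strong uniqueness result \cite{AbbFei2}: both $\vv$ and $\tilde{\vv}$ solve the same momentum equation with the same force $\mathbf{f}$ and the same initial data, $\vv$ obeys the energy inequality \eqref{ws5}, and $\tilde{\vv}$ is strong, whence $\vv = \tilde{\vv}$ on $[0, T_{\rm max})$. In this way $\vv$ inherits the full velocity regularity without our ever having to solve the coupled problem directly.

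With $\vv$ now regular, in particular bounded together with $\Grad \vv$, I would return to the heat equation and run a bootstrap. Writing $\Div(\vv \Theta) = \vv \cdot \Grad \Theta$ and passing to $V = \Theta + \lambda \avintO{ \Theta }$ exactly as in Lemma \ref{sL1} recasts \eqref{a3}, \eqref{a5} as a Dirichlet problem for the self--adjoint isomorphism $\mathcal{L}$, in which the nonlocal average is a bounded lower--order perturbation. Parabolic $L^p$ maximal regularity for the Dirichlet Laplacean on the $C^2$ domain, applied with the bounded transport term $\vv \cdot \Grad \Theta$ as right--hand side and with compatible data $\Theta_0 \in W^{2,p}$, $\Theta_B \in C^2(\Ov{\Omega})$, then delivers $\Theta \in L^p(0,T; W^{2,p}(\Omega))$ and $\partial_t \Theta \in L^p(0,T; L^p(\Omega))$, completing \eqref{M4}. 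Finally, when $d = 2$ the standard enstrophy estimate for the two--dimensional Navier--Stokes system driven by an $L^\infty$ force rules out blow--up on any bounded interval, so the strong velocity, and with it the bootstrapped temperature, persist globally and $T_{\rm max} = \infty$.

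The main obstacle I expect is not the interior smoothing but the correct handling of the nonlocal boundary condition within the $L^p$ framework: one must check that the substitution $\Theta \mapsto V$ and the rank--one average operator $\avintO{ \cdot }$ preserve the maximal regularity estimate, which is precisely where the sign $\lambda > 0$ (hence invertibility of $\mathcal{L}$) enters, and that the compatibility conditions in \eqref{M3} translate into admissible initial data for $V$ in the appropriate trace space. A secondary point requiring care is the uniform--in--time control behind the two--dimensional global statement, since the $L^\infty$ bound on $\Theta$ is a priori only finite on each bounded interval.
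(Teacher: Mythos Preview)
Your treatment of the velocity is exactly the paper's argument: freeze the buoyancy term as a known $L^\infty$ force, produce a strong Navier--Stokes solution via $L^p$ maximal regularity (Solonnikov, Giga--Miyakawa, with Gerhardt and von Wahl supplying $T_{\rm max}=\infty$ for $d=2$), and identify it with the weak velocity through the weak--strong uniqueness principle \cite{AbbFei2}.

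For the temperature, however, the paper takes a genuinely different route. Rather than applying maximal regularity directly to the $V$--problem, it first \emph{differentiates the heat equation in time}: setting $\vartheta=\partial_t\Theta$ yields a linear problem for $\vartheta$ with homogeneous nonlocal boundary condition $\vartheta|_{\partial\Omega}=-\lambda\avintO{\vartheta}$ and forcing $-\Div(\partial_t\vv\,\Theta)$. The substitution $V=\vartheta+\lambda\avintO{\vartheta}$ and an energy estimate of the type used in Lemma~\ref{sL1} then give $\partial_t\Theta\in L^\infty(0,T;L^2(\Omega))$, hence $t\mapsto\avintO{\Theta}$ is Lipschitz. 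At that point the boundary datum $\Theta_B-\lambda\avintO{\Theta}$ is Lipschitz in $t$ and $C^2$ in $x$, so Denk--Hieber--Pr\"uss \cite{DEHIEPR} applies to the \emph{original} $\Theta$ equation viewed as a \emph{standard} Dirichlet problem, and \eqref{M4} follows.

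Your direct approach is reasonable, but the phrase ``bounded lower--order perturbation'' is not accurate and hides the real difficulty. After the substitution one has $\mathcal{L}\,\partial_t V+\vv\cdot\Grad V=\kappa\Del V$, so the rank--one piece sits at the \emph{same} order as $\partial_t V$. Inverting $\mathcal{L}$ produces, besides $-\vv\cdot\Grad V$, a spatially constant forcing $\frac{\kappa\lambda}{|\Omega|}\int_{\partial\Omega}\Grad V\cdot\vc{n}\,{\rm d}\sigma_x$, i.e.\ a boundary flux of the unknown rather than a lower--order interior term. This can be handled (short--time contraction, or an argument that bounded rank--one perturbations of the identity preserve maximal $L^p$ regularity), but it is precisely the step that needs justification---as you yourself anticipate in your final paragraph. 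The paper's time--differentiation trick sidesteps the issue entirely: once $\avintO{\Theta}$ is known to be Lipschitz, the nonlocal boundary condition becomes an ordinary inhomogeneous Dirichlet condition with regular data, and no perturbation argument is needed.
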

	
\end{mdframed}

The rest of this section is devoted to the proof of Theorem \ref{MT2}. 

\subsection{Regularity of the velocity field}

As $\Theta \in L^\infty((0,T) \times \Omega)$ the velocity field $\vv$ solves the Navier--Stokes system \eqref{a1}, \eqref{a2} with a bounded driving force $\Theta \Grad G$, in particular, 
$\Theta \Grad G \in L^p((0,T) \times \Omega; R^d)$ for any $1 \leq p < \infty$. In view of the 
available $L^p-$ theory for the Stokes system due to Solonnikov \cite{Solon} and its adaptation to the Navier--Stokes system by Giga and Miyakawa \cite{GigMiy} (cf. also Bothe and Pruess \cite{BotPru}), there exists a time $T_{\rm max} > 0$ such that the Navier--Stokes system with the initial data in the class 
\eqref{M3} and the right--hand side $\Theta \Grad G$ admits a strong solution $\vv$, 
\[
\vv \in L^p(0,T; W^{2,p}(\Omega; R^d)),\ \partial_t \vv \in L^p(0,T; L^{p}(\Omega; R^d))
\]	
for any $1 \leq p < \infty$ and any $0 < T < T_{\rm max}$. Moreover, as shown by Gerhardt \cite{Gerh} and 
von Wahl \cite{vonWahl}, $T_{\rm max} = \infty$ if $d = 2$.

Finally, as any weak solution in the sense of Definition \ref{sD1} satisfies the energy inequality 
\eqref{ws5}, we conclude, using the general weak--strong uniqueness principle established in \cite{AbbFei2}, 
that the velocity component $\vv$ necessarily enjoys the regularity claimed in \eqref{M4}.

\subsection{Regularity of the temperature}

In view of the previous discussion, we already know that $\Theta$ satisfies equation \ref{a3} with \emph{regular} drift term, in particular $\Theta$ is regular inside $\Omega$. Moreover, given $\vv$, the 
solution $\Theta$ is unique in the class of weak solutions, see Lemma \ref{sL1}. Consequently, it is enough to establish {\it a priori} bounds that would guarantee Lipschitz continuity of the 
average $\avintO{ \Theta }$ in time. Indeed the abstract theory of Denk, Hieber, and Pruess \cite{DEHIEPR} 
would then guarantee the required $L^p-$regularity of $\Theta$. Note that the existing bounds established so far 
only imply 
\[
\Theta \in C_{\rm weak} ([0,T]; L^2(\Omega)) \ \Rightarrow \ 
\avintO{ \Theta } \in C[0,T].
\]

The desired {\it a priori} bound follow by differentiating equation \eqref{a3} in time. Setting 
\[
\partial_t \Theta = \vt 
\]
we obtain 
\begin{equation} \label{ss5}
	\partial_t \vt + \Div (\vv \vt) = \kappa \Del \vt - \Div (\partial_t \vv \Theta) \ \mbox{in}\ 
	(0,T) \times \Omega
	\end{equation}
with the boundary condition 
\begin{equation} \label{ss6} 
	\vt = - \lambda \avintO{ \vt } \ \mbox{on}\ \partial \Omega, 
	\end{equation}
and the initial condition 
\begin{equation} \label{ss7}
	\vt(0, \cdot) = \vt_0 = \kappa \Del \Theta_0 - \Div (\vv_0 \Theta_0 ).
\end{equation}

Similarly to the above, we consider 
\[
V = \vt + \lambda \vt, 
\]
rewriting \eqref{ss5} in the form 
\begin{equation} \label{ss8}
	\partial_t \left( V - \frac{\lambda}{1 + \lambda} \avintO{ V } \right) + \Div (\vv V) = 
	\kappa \Del V - \Div (\partial_t \vv \Theta),\ V|_{\partial \Omega} = 0.
\end{equation}
Evoking the energy estimate \eqref{ws11} we get 
\begin{equation} \label{ss9}
	\frac{1}{2} \frac{1}{1 + \lambda}\intO{ \left| V \right|^2 (\tau, \cdot) } 
	+ \kappa \int_0^\tau \intO{ |\Grad V|^2 } \leq 
	\frac{1}{2}  \intO{ |V_0 |^2 } + \int_0^\tau \intO{ \Theta \partial_t \vv \cdot \Grad V } 
\end{equation}
for any $0 \leq \tau \leq T$. Seeing that $\Theta \in L^\infty((0,T) \times \Omega)$, 
$\partial_t \vv \in L^2(0,T; L^2(\Omega; R^d))$ we deduce the desired bound 
\[
V \in L^\infty(0,T; L^2(\Omega)) \ \Rightarrow \ \partial_t \Theta 
\in L^\infty(0,T; L^2(\Omega)) \ \Rightarrow\ \avintO{ \Theta } \in W^{1,\infty}(0,T).
\]

We have proved Theorem \ref{MT2}.

\section{Classical solutions}
\label{cs}

Finally, we claim the following result that can be obtained as a consequence of the standard parabolic theory 
given the regularity of the weak solution established in Theorem \ref{MT2}.

\begin{mdframed}[style=MyFrame]
	\begin{Theorem}[\bf Classical solutions] \label{MT3}
		
		In addition to the hypotheses of Theorem \ref{MT1}, suppose
		that $\Omega \subset R^d$, $d=2,3$ is a bounded domain of class $C^{2 + \nu}$, $\nu > 0$,
		\[
		G \in C^{1 + \mu}(\Ov{\Omega}), \ \Theta_B \in C^{2 + \nu}(\Ov{\Omega}).
		\]
		 and the initial data satisfy 
		\begin{align} 
			\Theta_0 \in C^{2 + \nu}(\Omega),\ \vv_0 \in C^{2 + \nu}(\Omega; R^d),\ \Div \vv_0 = 0,  \br
			\vv_0 = 0,\ \Theta_0 + \lambda \avintO{ \Theta_0 } = 
			\Theta_B \ \mbox{on}\ \partial \Omega, \br 
			- \mu \Del \vv_0 + \Theta_0 \Grad G = - \Grad \Pi_0 \ \mbox{on} \ \partial \Omega,\br
			\kappa \Del \Theta_0 = - \frac{\lambda \kappa}{|\Omega|} \int_{\partial \Omega} \Grad \Theta_0 \cdot \vc{n} 
			\ \D \sigma_x \ \mbox{on}\ \partial \Omega. 
			\label{M5}
		\end{align}
		
		Then there exists $T_{\rm max} > 0$, $T_{\rm max} = \infty$ if $d = 2$, such that
		any weak solution $\vv$, $\Theta$
		is a classical solution, specifically, 
		\begin{align} 
			\vv , \ \nabla_x^2 \vv,\ \partial_t \vv \in C^\beta([0,T] \times \Ov{\Omega}; R^d), \br 
			\Theta, \ \nabla^2_x \Theta, \ \partial_t \Theta \in  C^\beta([0,T] \times \Ov{\Omega})
			\ \mbox{for some}\ \beta > 0,
			\label{M6}	
		\end{align}	
		for any $0 < T < T_{\rm max}$.	
		
	\end{Theorem}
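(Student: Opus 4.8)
The plan is a bootstrap argument that upgrades the $L^p$--regularity furnished by Theorem \ref{MT2} to the Hölder class \eqref{M6} via linear parabolic/Stokes Schauder theory. First I would invoke the anisotropic parabolic Sobolev embedding (Ladyzhenskaya, Solonnikov, and Ural'ceva): since Theorem \ref{MT2} gives $\vv, \Theta \in L^p(0,T; W^{2,p}(\Omega))$ together with $\partial_t \vv, \partial_t \Theta \in L^p(0,T; L^p(\Omega))$ for every finite $p$, choosing $p > d+2$ yields
\[
\vv, \Theta \in C^{1 + \alpha, (1 + \alpha)/2}([0,T] \times \Ov{\Omega})
\]
for some $\alpha \in (0,1)$; in particular $\vv$, $\Grad \vv$, $\Theta$, $\Grad \Theta$ are Hölder continuous up to the boundary.

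Next I would handle the temperature boundary datum, which carries the only genuinely new difficulty, namely the non--local term $\lambda \avintO{\Theta}$. To apply Schauder theory I need the lateral datum $\Theta_B - \lambda \avintO{\Theta}$ in $C^{2 + \beta, 1 + \beta/2}$; the spatial factor $\Theta_B \in C^{2 + \nu}(\Ov{\Omega})$ is harmless, whereas Theorem \ref{MT2} only provides $\avintO{\Theta} \in W^{1,\infty}(0,T)$. Here the no--slip condition is decisive: integrating \eqref{a3} over $\Omega$ and using $\vv|_{\partial \Omega} = 0$ makes the convective flux drop out entirely, leaving
\[
\frac{\dd}{\dd t} \avintO{\Theta} = \frac{\kappa}{|\Omega|} \int_{\partial \Omega} \Grad \Theta \cdot \vc{n} \ \D \sigma_x .
\]
Since $\Grad \Theta$ is already Hölder continuous up to $\partial \Omega$ by the previous step, the right--hand side is $C^{\alpha/2}$ in time, whence $\avintO{\Theta} \in C^{1 + \alpha/2}(0,T)$ and the boundary datum acquires the regularity required by the Schauder theory.

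With Hölder coefficients and data in hand, I would close the argument by the standard linear theory. For the velocity I regard $\vv$ as a solution of the (Navier--)Stokes system with forcing $\Theta \Grad G \in C^{\alpha, \alpha/2}$ (using $G \in C^{1 + \mu}(\Ov{\Omega})$) and apply the Hölder theory of Solonnikov to obtain $\vv, \nabla_x^2 \vv, \partial_t \vv \in C^{\beta, \beta/2}$; for the temperature I regard \eqref{a3} as a linear parabolic equation with Hölder drift $\vv$ and the boundary datum just constructed, obtaining $\Theta, \nabla_x^2 \Theta, \partial_t \Theta \in C^{\beta, \beta/2}$. Regularity up to the corner $\{t=0\} \times \partial \Omega$ is guaranteed precisely by the compatibility conditions \eqref{M5}. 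Indeed, $\vv_0 = 0$ on $\partial \Omega$ together with $-\mu \Del \vv_0 + \Theta_0 \Grad G = -\Grad \Pi_0$ is the zeroth-- and first--order compatibility for the momentum equation, the convective term $\Div(\vv_0 \otimes \vv_0)$ vanishing at $t=0$ on $\partial \Omega$ because $\vv_0 = 0$ there; likewise $\Theta_0 + \lambda \avintO{\Theta_0} = \Theta_B$ and $\kappa \Del \Theta_0 = -\frac{\lambda \kappa}{|\Omega|} \int_{\partial \Omega} \Grad \Theta_0 \cdot \vc{n} \, \D \sigma_x$ are the corresponding conditions for the heat equation, obtained by differentiating $\Theta + \lambda \avintO{\Theta} = \Theta_B$ in time and evaluating $\partial_t \Theta(0,\cdot) = \kappa \Del \Theta_0 - \Div(\vv_0 \Theta_0)$ on $\partial \Omega$, the convective contribution again dropping because $\vv_0 = 0$ on $\partial \Omega$ and its spatial average vanishes by the divergence theorem. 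A final bootstrap, feeding the improved Hölder regularity of $\vv$ and $\Theta$ back into the products $\Theta \Grad G$ and $\Div(\vv \Theta)$, delivers \eqref{M6} with the same maximal time $T_{\rm max}$ as in Theorem \ref{MT2}.

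The step I expect to be the main obstacle is exactly the time regularity of the non--local boundary term: without the flux identity above, $\avintO{\Theta}$ would only be Lipschitz, and the parabolic Schauder estimates up to the lateral boundary could not be invoked. The cancellation of the convective flux at $\partial \Omega$, which is a direct consequence of $\vv|_{\partial \Omega} = 0$, is what reduces the temporal behaviour of $\avintO{\Theta}$ to a purely diffusive boundary flux and thereby makes the bootstrap go through.
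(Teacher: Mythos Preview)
Your bootstrap argument is correct and is precisely the ``standard parabolic theory'' that the paper invokes without detail in Section~\ref{cs}; in particular, your flux identity
\[
\frac{\dd}{\dd t}\avintO{\Theta}=\frac{\kappa}{|\Omega|}\int_{\partial\Omega}\Grad\Theta\cdot\vc{n}\,\D\sigma_x
\]
is exactly the device needed to upgrade the $W^{1,\infty}$ regularity of $\avintO{\Theta}$ obtained in Theorem~\ref{MT2} to $C^{1+\alpha/2}$, which the paper leaves implicit. The remaining Schauder steps and the role of the compatibility conditions \eqref{M5} are as you describe.
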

	
\end{mdframed}

\bibliographystyle{plain}

\def\cprime{$'$} \def\ocirc#1{\ifmmode\setbox0=\hbox{$#1$}\dimen0=\ht0
  \advance\dimen0 by1pt\rlap{\hbox to\wd0{\hss\raise\dimen0
  \hbox{\hskip.2em$\scriptscriptstyle\circ$}\hss}}#1\else {\accent"17 #1}\fi}

\end{document}